% This is a specimen paper. Place it in the same directory with all the other
% files in the zip file, and then compile it with LaTeX to see how it works.

\documentclass[vecarrow]{svmult}

\usepackage{makeidx}         % allows index generation
\usepackage{graphicx}        % standard LaTeX graphics tool
\usepackage{psfrag}                             % when including figure files
\usepackage{multicol}        % used for the two-column index
\usepackage{amsmath}
\usepackage{amssymb}
\usepackage{amsfonts}
\usepackage{latexsym}
\usepackage[T1]{fontenc}
\usepackage{subfigure}                      % figures and subfigures
\usepackage{float}
\usepackage{mathrsfs}
\usepackage{mathptmx}         % changes fonts to 
\usepackage{color}
\usepackage{hyperref}
%Times

\begin{document}

\title{Some convergence results on the periodic Unfolding Operator in
 Orlicz setting}
\index{Unfolding Method}

\titlerunning{Unfolding in Orlicz setting}

\author{Joel Fotso Tachago, Giuliano Gargiulo, Hubert Nnang, Elvira Zappale}

\authorrunning{J. Fosto Tachago, G. Gargiulo, H. Nnang, E. Zappale}

\institute{J. Fotso Tachago\at University of Bamenda, Bamenda, Cameroon,\hfill\break
\email{fotsotachago@yahoo.fr}
\and
G. Gargiulo\at  University of Sannio, Benevento, Italy, \hfill\break
\email{ggargiul@unisannio.it}, 
\and
H. Nnang\at University of Yaounde I, Yaounde, Cameroon,
\hfill\break
\email{hnnang@yahoo.fr}\and
E. Zappale \at Sapienza- University of Rome, Rome, Italy,\hfill\break
\email{elvira.zappale@uniroma1.it}}

\maketitle

\index{integral!equations, nonuniqueness}
\index{plane!elasticity}

\bgroup

% The following are the author's own macros. Use your own ONLY if you are
% conversant with LaTeX and know what you are doing. There must be no clash
% between your macros and the publisher's. To avoid this, use the construction
% with \bgroup (see immediately above) and \par\egroup (see at the very end
% of this specimen paper). This will make your macros local.

\renewcommand{\b}{\beta}
\renewcommand{\c}{\gamma}
\newcommand{\n}{\nu}
\newcommand{\x}{\xi}
\renewcommand{\l}{\lambda}
\newcommand{\m}{\mu}
\newcommand{\h}{\chi}
\newcommand{\s}{\sigma}
\renewcommand{\t}{\theta}
\newcommand{\trans}{^{\rm T}}
\newcommand{\ds}{{\partial S}}
\renewcommand{\o}{\omega}
\newcommand{\bb}{\mathcal B}
\newcommand{\rr}{\mathcal R}
\renewcommand{\ll}{\mathcal L}
\renewcommand{\ss}{\mathcal S}
\renewcommand{\gg}{\mathcal G}
\newcommand{\vv}{\mathcal V}
\newcommand{\ww}{\mathcal W}
\newcommand{\Dminus}{(D$^{\omega-}$)}
\newcommand{\Nminus}{(N$^{\omega-}$)}
\newcommand{\Rminus}{(R$^{\omega-}$)}
\newcommand{\Vminus}{{\mathcal V}^{\omega-}}
\renewcommand{\Re}{\mathop\textrm{Re}}
\renewcommand{\Im}{\mathop{\rm Im}}
\newcommand{\mxy}{|x-y|}
\def\Wminus{{\mathcal W}^{\omega-}}
\def\regular#1{C^2(S^{#1})\cap C^1(\bar{S}^{#1})}
\def\holder#1{C^{\,\,#1,\alpha}(\partial S)}
\def\ppd#1{\dfrac{\partial}{\partial#1}}
\newcommand{\e}{\varepsilon}
\newcommand{\supsig}{^{(\sigma)}}
\newcommand{\up}{\upsilon}
\newcommand{\Up}{\Upsilon}
\newcommand{\p}{\psi}
\renewcommand{\a}{\alpha}
\newcommand{\f}{\varphi}
\newcommand{\F}{\Phi}
\renewcommand{\P}{\Psi}
\renewcommand{\d}{\delta}
\renewcommand{\k}{\kappa}
\newcommand{\sums}{\sum_{\s=1}^2}
\newcommand{\summ}{\sum_{m=0}^\infty}
\newcommand{\intl}{\int\limits}
\newcommand{\pd}{\partial}
\renewcommand{\D}{\Delta}

\newcommand{\Cc}{{\mathcal C}}
\newcommand{\Dd}{{\mathcal D}}
\newcommand{\Fill}{\makebox[6pt]{\hfill}}

\newcommand{\tr}{{\mathrm T}}

\allowdisplaybreaks

\abstract*{The matrix characterizing nonuniqueness of solution for Fredholm
integral equations of the first kind is constructed and approximated numerically
in the case of plane elastic deformations.}

\section{Introduction}\label{sec:con1}
Homogenization of periodic structures via two-scale convergence in
Orlicz setting was introduced in \cite{fotso nnang 2012} and later expanded in \cite{FNZIMSE, FNZOpuscula, FTGNZ} in order to deal with convex integral functionals, multiscale problems, and differential operators.  

With the aim of making two-scale convergence a convergence in a function space, the unfolding method has been introduced in \cite{CDG1, CDG2, CDG3, CDG4}.

The scope of this note is to introduce the unfolding operator in \cite{CDG1, CDG2} to the Orlicz setting,
% i.e. defining a convergence in a functional space,  to adapt result of unfolding method to
%Orlicz-setting. 
having in mind that these spaces generalize classical $L^{p} $ spaces and capture more information than those available in the latter setting.

%present an application of new approach on quasiconvex integrals, wich
%describe various physical situations as the behavior of cellular elastic
%materials with very fine structure.

Let $m,d$ be positive integers, $Y=\left] 0,1\right[^d$ and let $\mathcal{%
	A}_{0}$ be the class of all bounded open subsets of $
\mathbb{R}^d$ %with Lipschitz boundary 
and let 
$B$ be an $N-$function satisfying $\nabla_2$ and $\triangle_{2} $ conditions (cf. Subsection \ref{O-spaces} for definitions).

We intend for each $\Omega \in \mathcal A_0(\mathbb R^s)$, i.e. $\Omega$ bounded open subset of $\mathbb R^d$, %with Lipschitz boundary, 
and for every sequence $\left\{
\varepsilon\right\} \subseteq \left] 0,+\infty \right[ $ converging to $%
0,$ to define the unfolding operator in $L^B(\Omega;\mathbb R^m)$ and make a parallel with the analogous notion and convergence in $L^p(\Omega;\mathbb R^m)$. Since we can argue  componentwise, we will assume in the sequel, without loss of generality that $m=1$.

To this end, in subsection 1.2  we deal with preliminaries on Orlicz
spaces, while the remaining of the paper is devoted to define the unfolding  method in this framework and prove some convergence results, establishing parallels with the strong and weak convergence in $L^B(\Omega \times Y)$ and $L^B(\Omega)$ between unfolded sequences and their original counterparts, thus extending to the Orlicz setting the result proved by \cite{CDG1, CDG2, CDG3, CDG4}, leaving the parallel with the notion of two scale convergence in the standard and Orlicz setting (cf. \cite{ngu0, All1, LNW, V, fotso nnang 2012}), as well as the applications, the extension to more scales and to higher order derivatives as in \cite{All2, FZ, nnang reit, FNZIMSE, FNZOpuscula, FTGNZ} for a forthcoming paper. 

 Our main theorem is indeed the following: 
 
 \begin{theorem}\label{convBunf}
 	Let $B$ be an $N$-function satisfying $\nabla_2$ and $\Delta_2$ conditions. The following results hold:
 	\begin{itemize}
 		\item[i)] For $w\in L^{B}( \Omega) ,\mathcal{T}%
 		_{\varepsilon }\left( w\right) \to w$ strongly in $L^{B}\left(
 		\Omega \times Y\right) $.
 		
 		\item[ii)] Let $\left\{ w_{\varepsilon }\right\}_\varepsilon $ be a sequence in 
 		$L^{B}\left( \Omega \right) $ such that $w_{\varepsilon }\to w$
 		strongly in $L^{B}\left( \Omega \right) ,$ then $\mathcal{T}_{\varepsilon
 		}\left( w_\varepsilon\right) \to w$ strongly in $L^{B}\left( \Omega \times
 		Y\right) .$
 		
 		\item[iii)] \, For every relatively weakly compact sequence $\left\{
 		w_{\varepsilon }\right\}_\varepsilon $ in $L^{B}\left( \Omega \right) $\ the
 		corresponding $\left\{ \mathcal{T}_{\varepsilon }\left( w_{\varepsilon
 		}\right) \right\}_\varepsilon $ is relatively weakly compact in $L^{B}\left( \Omega
 		\times Y\right) .$
 		Furthermore,
 		if $\mathcal{T}_{\varepsilon }\left( w_{\varepsilon }\right)
 		\rightharpoonup \widehat{w}$ weakly in $L^{B}\left( \Omega \times Y\right) ,$
 		then $w_{\varepsilon }\rightharpoonup \mathcal{M}_{Y}\left( \widehat{w}%
 		\right) $ weakly in $L^{B}\left( \Omega \right) $.
 		
 		\item[iv)] If $\mathcal{T}_{\varepsilon }\left( w_{\varepsilon
 		}\right) \rightharpoonup \widehat{w}$ weakly in $L^{B}\left( \Omega \times
 		Y\right) ,$ then 
 		\begin{align*}\left\Vert \widehat{w}\right\Vert _{L^{B}\left( \Omega
 			\times Y\right) }\leq \underset{\varepsilon \rightarrow 0}{\lim \inf }\left(
 		1+\left\vert Y\right\vert \right) \left\Vert w_{\varepsilon }\right\Vert
 		_{L^{B}\left( \Omega \right) }.
 		\end{align*}
 	\end{itemize}
 \end{theorem}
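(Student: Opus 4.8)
The plan is to derive everything from a single \emph{modular unfolding identity} together with the uniform bound on $\mathcal{T}_\varepsilon$ that it produces, and then to combine these with density, duality and reflexivity in the Orlicz setting. Write $\widehat\Omega_\varepsilon$ for the union of the $\varepsilon$-cells entirely contained in $\Omega$, so that $|\Omega\setminus\widehat\Omega_\varepsilon|\to 0$. Since $\mathcal{T}_\varepsilon$ is, cell by cell, a composition with a measure-preserving affine map (after the change of variables $z=\varepsilon[x/\varepsilon]_Y+\varepsilon y$), for every Borel $F\ge 0$ one obtains $\int_{\Omega\times Y}F(\mathcal{T}_\varepsilon(w))\,dx\,dy=\int_{\widehat\Omega_\varepsilon}F(w)\,dx$; taking $F=B(\,\cdot\,/\lambda)$ and comparing Luxemburg norms yields the contraction
\begin{equation*}
\|\mathcal{T}_\varepsilon(w)\|_{L^B(\Omega\times Y)}\le\|w\|_{L^B(\Omega)}.
\end{equation*}
The same computation with $F(t)=t$ and the multiplicativity $\mathcal{T}_\varepsilon(w)\,\mathcal{T}_\varepsilon(\varphi)=\mathcal{T}_\varepsilon(w\varphi)$ gives the adjoint formula $\int_{\Omega\times Y}\mathcal{T}_\varepsilon(w)\,\mathcal{T}_\varepsilon(\varphi)=\int_{\widehat\Omega_\varepsilon}w\varphi$ (recall $|Y|=1$), which will carry the weak statements.

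For (i) I would first take $w\in C_c^\infty(\Omega)$: since $|\varepsilon[x/\varepsilon]_Y+\varepsilon y-x|\le\varepsilon\,\mathrm{diam}(Y)\to0$, uniform continuity gives $\mathcal{T}_\varepsilon(w)\to w$ uniformly on $\widehat\Omega_\varepsilon\times Y$, hence in $L^B(\Omega\times Y)$. For general $w\in L^B(\Omega)$ I use that $\Delta_2$ makes $C_c^\infty(\Omega)$ dense in $L^B(\Omega)$, and I close with a three-$\varepsilon$ estimate in which the contraction above controls the error $\mathcal{T}_\varepsilon(w-\psi)$ by $\|w-\psi\|_{L^B(\Omega)}$. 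Statement (ii) is then immediate from the splitting
\begin{equation*}
\mathcal{T}_\varepsilon(w_\varepsilon)-w=\mathcal{T}_\varepsilon(w_\varepsilon-w)+\bigl(\mathcal{T}_\varepsilon(w)-w\bigr),
\end{equation*}
bounding the first term by $\|w_\varepsilon-w\|_{L^B(\Omega)}\to0$ via the contraction and sending the second to $0$ by (i).

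For (iii) I would invoke that $\Delta_2$ together with $\nabla_2$ make $L^B$ reflexive with $(L^B)^*=L^{\widetilde B}$, where $\widetilde B$ is the conjugate $N$-function (again satisfying both conditions); hence relative weak compactness is equivalent to norm boundedness, and the contraction transfers boundedness of $\{w_\varepsilon\}$ to $\{\mathcal{T}_\varepsilon(w_\varepsilon)\}$, which settles the first assertion. To identify the limit, for $\varphi\in L^{\widetilde B}(\Omega)$ I write $\int_\Omega w_\varepsilon\varphi=\int_{\widehat\Omega_\varepsilon}w_\varepsilon\varphi+\int_{\Omega\setminus\widehat\Omega_\varepsilon}w_\varepsilon\varphi$; the adjoint formula turns the first integral into $\int_{\Omega\times Y}\mathcal{T}_\varepsilon(w_\varepsilon)\,\mathcal{T}_\varepsilon(\varphi)$, where $\mathcal{T}_\varepsilon(\varphi)\to\varphi$ strongly in $L^{\widetilde B}(\Omega\times Y)$ by (i) applied to $\widetilde B$ while $\mathcal{T}_\varepsilon(w_\varepsilon)\rightharpoonup\widehat w$, so the weak–strong pairing passes to the limit $\int_{\Omega\times Y}\widehat w(x,y)\varphi(x)\,dx\,dy=\int_\Omega\mathcal{M}_Y(\widehat w)\,\varphi$. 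The boundary term is $O\bigl(\|w_\varepsilon\|_{L^B}\,\|\varphi\,\mathbf 1_{\Omega\setminus\widehat\Omega_\varepsilon}\|_{L^{\widetilde B}}\bigr)\to0$ by Hölder and the absolute continuity of the $L^{\widetilde B}$-norm (a consequence of $\widetilde B\in\Delta_2$). Testing first on a dense set of smooth $\varphi$ and extending by the uniform bound on $\{w_\varepsilon\}$ gives $w_\varepsilon\rightharpoonup\mathcal{M}_Y(\widehat w)$ in $L^B(\Omega)$. Finally (iv) combines weak lower semicontinuity of the Luxemburg norm, $\|\widehat w\|_{L^B(\Omega\times Y)}\le\liminf_\varepsilon\|\mathcal{T}_\varepsilon(w_\varepsilon)\|_{L^B(\Omega\times Y)}$, with a norm bound for $\mathcal{T}_\varepsilon$; the contraction already gives $\|\mathcal{T}_\varepsilon(w_\varepsilon)\|\le\|w_\varepsilon\|$, from which the stated inequality with the (non-optimal) constant $1+|Y|$ follows a fortiori, or else directly from splitting $\mathcal{T}_\varepsilon(w_\varepsilon)$ into its $Y$-mean and its oscillation. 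I expect the genuine difficulties to be concentrated in the Orlicz-specific functional analysis rather than in the geometry of unfolding: guaranteeing reflexivity and the duality $(L^B)^*=L^{\widetilde B}$ from $\Delta_2$ and $\nabla_2$, securing density of smooth functions and absolute continuity of the conjugate norm for the boundary layer, and justifying the weak–strong passage to the limit — all points where the failure of either growth condition would break the argument; the cell-wise measure-preserving structure underlying the modular identity is, by contrast, purely computational.
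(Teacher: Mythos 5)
Your proposal is correct and follows essentially the same route as the paper: the cell-wise modular identity (the paper's Proposition 3, yielding the uniform bound \eqref{est1}), density of smooth functions plus uniform boundedness for (i), linearity for (ii), reflexivity of $L^B$ under $\Delta_2$ and $\nabla_2$ together with the adjoint/u.c.i. formula and the vanishing boundary-layer term (the paper's Proposition 6) for (iii), and weak lower semicontinuity of the Luxemburg norm for (iv). The only differences are cosmetic: you normalize $|Y|=1$ to get the sharper contraction constant $1$ in place of $(1+|Y|)$, and you make explicit the application of (i) to the conjugate $N$-function $\widetilde B$ in the weak--strong pairing, a step the paper leaves implicit.
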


\section{Notation and Preliminaries}\label{sec:con2}

In what follows $X$ and
$V$ denote a locally compact space and a Banach space, respectively, and the spaces $L^p(X; V)$ and $L^p_{\rm loc}(X; V)$
($X$ provided with a positive Radon measure) are denoted by $L^p(X)$ and
$L^p_{\rm loc}(X)$, respectively, when $V = \mathbb R$. We refer to \cite{FLbook} for integration theory and more details.

In the sequel we denote by $Y$ the cube $]0,1[^d$, or, more generally, a similar reference cell or set with the paving property, e.g., any cube with sides parallel to the coordinate axes.\footnote{The Lebesgue measure of the standard unit cube mentioned above is 1, so, by considering only this case, some formulas may be simplified.}

%In order to enlighten the space variable under consideration we will adopt the notation $\mathbb R^d_x, \mathbb R^d_y$, or $\mathbb R^d_z$ to indicate where $x,y $ or $z$ belong to.

The family of open subsets in $\mathbb R^d$ will be denoted by $\mathcal A(\mathbb R^d)$, its subsets constituted by bounded sets %\color{blue} whose boundary has null Lebesgue measure (i.e. $\mathcal L^d(\partial \Omega))=0$) \color{black}  
is denoted by $\mathcal A_0(\mathbb R^d)$.

For any $N \in \mathbb N$, and any subset $E$ of $\mathbb R^N$, by $\overline E$, we denote its closure in the relative topology, by $\widehat E$ its interior, and by $\partial E$, its topological boundary.
For any set $E$, its characteristic function $\chi_E$ is defined as $\left\{\begin{array}{ll}
 1 &\hbox{ in  }E,\\
0 &\hbox{ otherwise.}\end{array}\right.$.

For every $x \in \mathbb R^d$ we denote by $[x]$ its integer part, namely the vector in $\mathbb Z^d$, which has as components the integer parts of the components of $x$.

By $\mathcal L^d$ we denote the Lebesgue measure in $\mathbb R^d$, hence for any $E \subset \mathbb R^d$, $\mathcal L^d(E)$ denotes its measure, but for the sake of shortening the notation, when $E=Y$, we will adopt the notation $|Y|$ instead of $\mathcal L^d(Y)$.

\subsection{Orlicz spaces}\label{O-spaces}

\bigskip Let $B:\left[ 0,+\infty \right[ \rightarrow \left[ 0,+\infty \right[
$ be an ${\rm N}-$function (see \cite{ada} for more results and proofs on $N$-functions and Orlicz spaces), i.e., $B$ is continuous, convex, with $%
B\left( t\right) >0$ for $t>0,\frac{B\left( t\right) }{t}\rightarrow 0$ as $t\rightarrow 0,$ and $\frac{B\left( t\right) }{t}\rightarrow \infty $ as $%
t\rightarrow \infty .$
Equivalently, $B$ is of the form $B\left( t\right)
=\int_{0}^{t}b\left( \tau \right) d\tau ,$ where $b:\left[ 0,+\infty \right[
\rightarrow \left[ 0,+\infty \right[ $ is non decreasing, right continuous,
with $b\left( 0\right) =0,b\left( t\right) >0$ if $t>0$ and $b\left(
t\right) \rightarrow +\infty $ if $t\rightarrow +\infty .$ 

We denote by $\widetilde{B},$ the complementary ${\rm N}-$function of $B$ defined by $$\widetilde{B}(t)=\sup_{s\geq 0}\left\{ st-B\left( s\right) ,t\geq 0\right\}.
$$ It follows
that 
\begin{equation}\nonumber 
	\frac{tb(t)}{B(t)} \geq 1
	\;\;(\hbox{or }> \hbox{if }b\hbox{ is strictly increasing}),
\end{equation}
\begin{equation}
	\nonumber \widetilde{B}( b(t) )\leq
	tb( t) \leq B( 2t) \hbox{ for all }t>0.
\end{equation}
An ${\rm N}-$function $B$ is of class $\triangle _{2}$ near $\infty$ (denoted  by $B\in \triangle
_{2}$) if there are $\alpha >0$ and $t_{0}\geq 0$ such that 
\begin{equation}\label{Delta2}B\left(
	2t\right) \leq \alpha B\left( t\right) 
\end{equation} for all $t\geq t_{0}$. 

\noindent In what
follows every $N$- function $B$  and its conjugate $\widetilde{B}$ 
satisfy the $\triangle_2$ condition, this latter property can be equivalently stated saying that $B$ satisfies the $\nabla_2$ condition.
In the sequel, unless otherwise specified, $c$ and $C$ refer to positive constants, which may vary from line to line. 

%It is also worth recalling that given two $N$-functions
%$B$ and $C$, $B$ dominates $C$ (denoted as $C \prec B$) if there is $k>0$ such that $C(t) \leq B(k t)$ for all $t \geq 0$. Hence, it follows that $C \prec B$ if and only if 
%${\widetilde B} \prec {\widetilde C}$.

Let $\Omega $ be a
bounded open set in $\mathbb R^d$, the Orlicz space
\begin{equation*}L^{B}\left(
	\Omega \right) =\left\{ u:\Omega \rightarrow 
	\mathbb R \hbox{ measurable},\lim_{\delta \to 0^+} \int_{\Omega
	}B\left( \delta \left\vert u\left( x\right) \right\vert \right) dx=0\right\} 
\end{equation*}
is a Banach space with respect to the Luxemburg norm: \begin{equation*}\left\Vert u\right\Vert
	_{B,\Omega }=\inf \left\{ k>0:\int_{\Omega }B\left( \frac{\left\vert u\left(
		x\right) \right\vert }{k}\right) dx\leq 1\right\}, %<+\infty .
	\end{equation*}
moreover, 
$L^{B}\left(\Omega\right)$ coincides with the set of measurable functions for which the Luxemburg norm is finite.

It follows
that: $\mathcal{D}\left( \Omega \right) $ is dense in $L^{B}\left( \Omega
\right)$, $L^{B}\left( \Omega \right) $ is separable and reflexive, the dual
of $L^{B}\left( \Omega \right) $ may be identified with $L^{\widetilde{B}}\left(
\Omega \right),$ with norm given by %$L^{\widetilde{B}}\left( \Omega \right) $
%is equivalent to 
$\left\Vert \cdot\right\Vert _{\widetilde{B},\Omega }.$
We will denote the norm of elements in $L^{B}\left( \Omega \right)$, both by $\|\cdot\|_{L^{B}\left( \Omega \right)}$ and with $\|\cdot\|_{B, \Omega}$, the latter symbol being useful when we want emphasize the domain $\Omega$.

Futhermore, it is also convenient to recall that:
\begin{itemize} 
	\item[(i)]\; $
	\left\vert \int_{\Omega }u\left( x\right) v\left( x\right) dx\right\vert
	\leq 2\left\Vert u\right\Vert _{B,\Omega }\left\Vert v\right\Vert _{%
		\widetilde{B},\Omega }$ for $u\in L^{B}\left( \Omega \right) $ and $v\in L^{%
		\widetilde{B}}\left( \Omega \right) $, 
	\item[(ii)]\; given $v\in L^{%
		\widetilde{B}}\left( \Omega \right) $ the linear functional $L_{v}$ on $%
	L^{B}\left( \Omega \right) $ defined by $L_{v}\left( u\right):=\int_{\Omega
	}u\left( x\right) v\left( x\right) dx,\left( u\in L^{B}\left( \Omega \right)
	\right) $ belongs to 
	%the dual $\left[ L^{B}\left( \Omega \right) \right]
	%^{\prime }=
	$L^{\widetilde{B}}\left( \Omega \right) $ with $\left\Vert
	v\right\Vert _{\widetilde{B},\Omega }\leq \left\Vert L_{v}\right\Vert _{\left[ L^{B}\left( \Omega \right) \right] ^{\prime }}\leq 2\left\Vert
	v\right\Vert_{\widetilde{B},\Omega }$, 
	\item[(iii)]\;  the property $\lim_{t \to +\infty} \frac{B\left( t\right) }{t}=+\infty $
	implies $L^{B}\left( \Omega \right) \subset L^{1}\left( \Omega \right)
	\subset L_{loc}^{1}\left( \Omega \right) \subset \mathcal{D}^{\prime }\left(
	\Omega \right),$ each embedding being continuous.
\end{itemize}

For the sake of notations, given any $m\in \mathbb N$, when $u:\Omega \to \mathbb R^m$, we mean that each component $(u^i)$, of $u$, $(1\leq i \leq m)$, lies in $L^B(\Omega)$ and  
we will denote the norm of $u$ with the symbol $\|u\|_{L^B(\Omega)^{d}}:=\sum_{i=1}^d \|u^i\|_{B,\Omega}$.

%Analogously one can define the Orlicz-Sobolev functional space as follows: 
%
%\noindent$%
%W^{1}L^{B}\left( \Omega \right) =\left\{ u\in L^{B}\left( \Omega \right) :%
%\frac{\partial u}{\partial x_{i}}\in L^{B}\left( \Omega \right) ,1\leq i\leq
%d\right\} ,$ where derivatives are taken in the distributional sense on $%
%\Omega .$ Endowed with the norm $\left\Vert u\right\Vert _{W^{1}L^{B}\left(
%	\Omega \right) }=\left\Vert u\right\Vert _{B,\Omega }+\sum_{i=1}^{d}$ $%
%\left\Vert \frac{\partial u}{\partial x_{i}}\right\Vert _{B,\Omega },u\in
%W^{1}L^{B}\left( \Omega \right) ,$\ \ $W^{1}L^{B}\left( \Omega \right) $ is
%a reflexive Banach space. We denote by $W_{0}^{1}L^{B}\left( \Omega \right)
%, $ the closure of $\ \mathcal{D}\left( \Omega \right) $\ in $%
%W^{1}L^{B}\left( \Omega \right) $ and the semi-norm $u\rightarrow \left\Vert
%u\right\Vert _{W_{0}^{1}L^{B}\left( \Omega \right) }=\left\Vert
%Du\right\Vert _{B,\Omega }=\sum_{i=1}^{d}$ $\left\Vert \frac{\partial u}{%
%	\partial x_{i}}\right\Vert _{B,\Omega }$ is a norm on $W_{0}^{1}L^{B}\left(
%\Omega \right) $ equivalent to $\left\Vert \cdot \right\Vert _{W^{1}L^{B}\left(
%	\Omega \right) }.$
%
%By $W_{\#}^{1}L^{B}\left( Y\right)$, we denote the space of functions $u \in W^1L^B(Y)$ such that $\int_Y u(y)d y=0$.  It is endowed with the gradient norm.

Given a function space $S$ defined in $Y$, 
%$Z$ or $Y\times Z$, 
the subscript $S_{per}$ means that its elements are $Y$-periodic.
%, $Z$ or $Y\times Z$, as it will be clear from the context. 
In particular $C_{per}(Y)$ 
%and $C_{per}(Y\times Z)$ 
denotes the space of $Y$-periodic functions in $C(\mathbb R^d)$.
%and $C(\mathbb R^d_y\times \mathbb R^d_z)$, resoectively, 
%i.e. that verify $w(y+k)$ for $y \in \mathbb R^d$ and $k \in \mathbb Z^d$.
% and  $w(y + k, z + h) = w(y, z)$ for $(y, z) \in \mathbb R^d \times \mathbb R^d$
%and $(k, h) \in \mathbb Z^d \times \mathbb Z^d$, respectively. 
$C^\infty_{per}(Y):= C_{per}(Y)\cap C^\infty(\mathbb R^d)$.
% and $C^\infty_{per}(Y\times Z)=C_{per}(Y\times Z)\cap C^\infty(\mathbb R^d_y\times \mathbb R^d_z)$, respcetively.  
For every $p\geq 1$, $L^p_{per}(Y)$, %$L^p_{per}(Y\times Z)$ are 
is the space of $Y$-periodic function in $L^p_{loc}(\mathbb R^d)$. 
%and $L^p_{loc}(\mathbb R^d_y \times \mathbb R^d_z)$, respectively. 
Analogously  $L^B_{per}(Y)$ is the space of $Y$-periodic functions in $L^B_{loc}(\mathbb R^d)$.
% and $L^B_{loc}(\mathbb R^d_y
%\times \mathbb R^d_z)$, respectively.

\section{The unfolding operator}\label{sec:con3}

Here and in the sequel, considering the above notation, we will consider $\Omega \in \mathcal A_0(\mathbb R^d)$.
Furthermore we assume that there exists a base $\mathcal B:=\{\bf b_1, \dots, \bf b_d\}\subset \mathbb R^d$, with the following property, i.e. for a.e. $z \in \mathbb R^d$ there exists unique $n_1,\dots, n_d \in \mathbb Z^d$ such that $z \in \xi + Y$, where $\xi = \sum_{h=1}^d n_h \bf b_h$. 

We set $\xi= [z]_Y$ and $\{z\}_Y:= z- [z]_Y\in Y$, hence, up to an affine transformation we can assume that $\mathcal B$ is the canonical basis in $\mathbb R^d$ and $\xi \in \mathbb Z^d$. In fact in most of the following results we will assume %to be a bounded open subset with Lipschitz boundary of $\mathbb R^d$
 $Y:=]0,1[^d$, $\mathcal B$ the canonical base and $[z]_Y$ the integer part of $z \in \mathbb R^d$, as in \cite{CDG2}. %\color{blue} and, unless otherwise specified, the positive constant $C$ may vary from line to line. \color{black} 
 %Following \cite{CDG2}, for $z\in 
%TCIMACRO{\U{211d} }%
%BeginExpansion
%\mathbb{R}^d ,\left[ z\right] _{Y}$ is the vector with components $\left[ z_{i}\right] 
%$ where $\left[ z_{i}\right] $\ is the integer part of $z_{i}.$ It follows
%that $z-\left[ z\right] _{Y}=\left\{ z\right\} _{Y}\in Y.$ 

\noindent Then, for each $%
x\in 
\mathbb{R}^d$, $x=\varepsilon\left( \left[ \frac{x}{\varepsilon }\right] _{Y}+\left\{ \frac{x}{%
	\varepsilon }\right\} _{Y}\right) .$ 

Define \begin{align}\label{2.1CDG2}
 \Xi_{\varepsilon }:=\left\{ \xi \in 
\mathbb{Z}
^{d},\;\varepsilon \left( \xi +Y\right) \subset \Omega \right\},\nonumber\\ 
\widehat{\Omega }_{\varepsilon }:= {\rm int} \left\{ \bigcup\limits_{\xi \in \Xi
	_{\varepsilon }}\varepsilon \left( \xi +\overline{Y}\right) \right\},\\
\Lambda _{\varepsilon }:=\Omega \backslash \widehat{\Omega }_{\varepsilon }.\nonumber
\end{align}
The set $\widehat \Omega_\varepsilon$  is the largest union of cells $\varepsilon(\xi + \overline Y)$ (with $\xi \in \mathbb Z^d$) included in $\Omega$ while $\Lambda_\varepsilon$ is
the subset of $\Omega$ containing the parts from cells $\varepsilon(\xi +\overline Y)$ intersecting the boundary $\partial \Omega$.

We are in position of introducing the unfolding operator, i.e. we recall \cite[Definition 2.1]{CDG2}, since it acts on Lebesgue measurable functions.

\begin{definition}\label{unfdef}
	For $\phi $ Lebesgue measurable on $\Omega$, the unfolding operator $%
	\mathcal{T}_{\varepsilon }$ is defined as 
	\begin{equation*}
		\mathcal{T}_{\varepsilon }\left( \phi \right) \left( x,y\right) =\left\{ 
		\begin{tabular}{ll}
			$\phi \left( \varepsilon \left[ \frac{x}{\varepsilon }\right]
			_{Y}+\varepsilon y\right) $ & a.e for $\left( x,y\right) \in \widehat{\Omega 
			}_{\varepsilon }\times Y$ \\ 
			$0$ & a.e for $\left( x,y\right) \in \Lambda _{\varepsilon }\times Y.$%
		\end{tabular}%
		\right.
	\end{equation*}%
	Clearly $\mathcal T_\varepsilon (\phi)$ is Lebesgue measurable in $\Omega \times Y$, and is $0$ whenever $x$ is outside $\widehat \Omega_\varepsilon$. Moreover, for every $v,w $ Lebesgue-measurable, $$\mathcal{T}_{\varepsilon }\left( vw\right) =\mathcal{T}%
	_{\varepsilon }\left( v\right) \mathcal{T}_{\varepsilon }\left( w\right)$$
\end{definition}

 Analogous identities hold for other pointwise operations on functions. In particular, the operator is linear with respect to pointwise operations.

\begin{proposition}
	Let $f\in L_{per}^{1}\left( Y\right) $ define the sequence $\left\{
	f_{\varepsilon }\right\}_\varepsilon $ by $f_{\varepsilon }\left( x\right) :=f\left( 
	\frac{x}{\varepsilon }\right) $ a.e for $x\in 
	%TCIMACRO{\U{211d} }%
	%BeginExpansion
	\mathbb{R}
	%EndExpansion
	^{n},$ then 
	\begin{equation*}
		\mathcal{T}_{\varepsilon }\left( f_{\varepsilon }|_{\Omega }\right) \left(
		x,y\right) =\left\{ 
		\begin{tabular}{ll}
			$f\left( y\right) $ & \;a.e for $\left( x,y\right) \in \widehat{\Omega }%
			_{\varepsilon }\times Y$, \\ 
			$0$ & \;a.e for $\left( x,y\right) \in \Lambda _{\varepsilon }\times Y.$%
		\end{tabular}%
		\right.
	\end{equation*}%
	If $f\in L_{per}^{B}\left( Y\right)$, %,and if $\Omega $\ is bounded, 
	$\mathcal{T}_{\varepsilon }\left( f_{\varepsilon }|_{\Omega }\right)
	\to f$ strongly in $L^{B}\left( \Omega \times Y\right) .$
\end{proposition}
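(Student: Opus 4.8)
The plan is to establish the two assertions separately: first the pointwise identity for $\mathcal{T}_\varepsilon(f_\varepsilon|_\Omega)$, which is essentially a computation, and then to use it to reduce the strong convergence to a boundary-layer estimate controlled by the $\Delta_2$ condition.

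For the identity, I would simply unwind Definition \ref{unfdef}. For a.e.\ $(x,y)\in\widehat{\Omega}_\varepsilon\times Y$ one has $\mathcal{T}_\varepsilon(f_\varepsilon|_\Omega)(x,y)=f_\varepsilon\big(\varepsilon[\tfrac{x}{\varepsilon}]_Y+\varepsilon y\big)=f\big([\tfrac{x}{\varepsilon}]_Y+y\big)$, where the second equality is just the definition $f_\varepsilon(\cdot)=f(\cdot/\varepsilon)$. Since in the canonical basis $[x/\varepsilon]_Y\in\mathbb{Z}^d$ and $f$ is $Y$-periodic, $f([x/\varepsilon]_Y+y)=f(y)$, which yields the first branch; the value $0$ on $\Lambda_\varepsilon\times Y$ is immediate from the definition. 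In compact form, $\mathcal{T}_\varepsilon(f_\varepsilon|_\Omega)(x,y)=f(y)\,\chi_{\widehat{\Omega}_\varepsilon}(x)$.

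For the convergence I interpret the limit $f$ as the function $(x,y)\mapsto f(y)$ on $\Omega\times Y$. Since $\Omega=\widehat{\Omega}_\varepsilon\cup\Lambda_\varepsilon$ up to a null set, with the two pieces disjoint, the identity above gives $\mathcal{T}_\varepsilon(f_\varepsilon|_\Omega)(x,y)-f(y)=-f(y)\,\chi_{\Lambda_\varepsilon}(x)$, so the claim reduces to showing $\|f(y)\chi_{\Lambda_\varepsilon}(x)\|_{L^B(\Omega\times Y)}\to 0$. I would control this through the modular: for any fixed $k>0$, the product structure together with $B(0)=0$ gives
\[
\int_{\Omega\times Y}B\left(\frac{|f(y)|\,\chi_{\Lambda_\varepsilon}(x)}{k}\right)dx\,dy=|\Lambda_\varepsilon|\int_Y B\left(\frac{|f(y)|}{k}\right)dy .
\]
Fixing an arbitrary $\delta>0$ and choosing $k=\delta$, the constant $C_\delta:=\int_Y B(|f(y)|/\delta)\,dy$ is finite because $f\in L^B_{per}(Y)$ and $B\in\Delta_2$; indeed the $\Delta_2$ condition is precisely what guarantees that the modular stays finite after rescaling $|f|$ by the factor $1/\delta$. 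Using the standard boundary-layer property $|\Lambda_\varepsilon|\to 0$ (valid for every bounded open $\Omega$, since $\Lambda_\varepsilon$ lies in an $O(\varepsilon)$-neighbourhood of $\partial\Omega$ inside $\Omega$), we obtain $|\Lambda_\varepsilon|\,C_\delta\le 1$ for all $\varepsilon$ small enough, hence $\|f(y)\chi_{\Lambda_\varepsilon}(x)\|_{L^B(\Omega\times Y)}\le\delta$; as $\delta$ is arbitrary, the norm tends to $0$.

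The step I expect to be the main obstacle is the passage from the measure decay $|\Lambda_\varepsilon|\to 0$ to the decay of the Luxemburg norm. In the $L^p$ setting one simply writes $\|f\chi_{\Lambda_\varepsilon}\|_{L^p}^p=|\Lambda_\varepsilon|\,\|f\|_{L^p(Y)}^p$ and concludes at once; in the Orlicz setting the norm is not a power of $|\Lambda_\varepsilon|$, so the argument must pass through the modular and must invoke $\Delta_2$ to ensure $C_\delta<\infty$ for the small values of $\delta$ (equivalently, the large rescalings of $|f|$) needed to push the norm below an arbitrary threshold. I would also be careful to justify $|\Lambda_\varepsilon|\to 0$ for a general bounded open $\Omega$ rather than tacitly assuming boundary regularity.
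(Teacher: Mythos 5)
Your proof is correct and takes essentially the same route as the paper: both reduce the convergence to the modular identity $\iint_{\Omega\times Y} B\left(\tfrac{|\mathcal{T}_\varepsilon(f_\varepsilon|_\Omega)-f|}{k}\right)dx\,dy=\mathcal L^d(\Lambda_\varepsilon)\int_Y B\left(\tfrac{|f|}{k}\right)dy$ and conclude from $\mathcal L^d(\Lambda_\varepsilon)\to 0$ via the definition of the Luxemburg norm. If anything, your write-up is slightly more explicit than the paper's, which compresses the passage from modular decay to norm decay (where $\Delta_2$ ensures $\int_Y B(|f|/\delta)\,dy<\infty$ for every $\delta>0$) into the phrase ``recalling the definition of norm in $L^B$ spaces,'' and which states the pointwise identity and $\mathcal L^d(\Lambda_\varepsilon)\to 0$ without proof.
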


\begin{proof} For any $k>0$ such that $\int_{Y}B\left( \frac{\left\vert f\right\vert }{k}\right) \left( y\right) dy\leq 1 ,$ observe that
	$B\left( \frac{\left\vert \mathcal{T}_{\varepsilon }\left( f_{\varepsilon
	}|_{\Omega }\right) -f\right\vert}{k} \right) \left( x,y\right) =\left\{ 
	\begin{tabular}{ll}
		$0$ & \;a.e for $\left( x,y\right) \in \widehat{\Omega }_{\varepsilon }\times
		Y$, \\ 
		$\frac{\left\vert f\left( y\right) \right\vert}{k} $ & \;a.e for $\left( x,y\right) \in
		\Lambda _{\varepsilon }\times Y.$%
	\end{tabular}%
	\right. $
	
\noindent Hence
\begin{align*}\iint_{\Omega \times Y}B\left(\frac{ \left\vert \mathcal{T}_{\varepsilon
}\left( f_{\varepsilon }|_{\Omega }\right) -f\right\vert}{k} \right) \left(
x,y\right) dxdy=\iint_{\Lambda _{\varepsilon }\times Y}B\left(\frac{ \left\vert
f\right\vert}{k} \right) \left( x,y\right) dxdy=\\
\mathcal L^d\left( \Lambda _{\varepsilon
}\right) \int_{Y}B\left( \frac{\left\vert f\right\vert}{k} \right) \left( y\right) dy.
\end{align*}

Since %\color{blue}$\mathcal L^d(\partial \Omega )=0,$ \color{black}
%is Lipschitz and bounded, 
$\mathcal L^d\left( \Lambda _{\varepsilon }\right)
\to 0$\ as $\varepsilon \to 0,$ recalling the definition of norm in $L^B$ spaces, $\mathcal{%
	T}_{\varepsilon }\left( f_{\varepsilon }|_{\Omega }\right) \to f$
strongly in $L^{B}\left( \Omega \times Y\right) .$ \end{proof}

 The following result holds. It can be immediately deduced by \cite[Proposition 2.5]{CDG2}, observing that for every $N$- function $B$ and $w \in L^B(\Omega)$ $B(\mathcal T_\varepsilon(w))= \mathcal T_\varepsilon B(w)$.
\color{black}

\begin{proposition}\label{Prop2}
	For every $N$-function $B$, the operator $\mathcal{T}_{\varepsilon }$\ is linear and
	continuous from $L^{B}\left( \Omega \right) $ to $L^{B}\left( \Omega \times
	Y\right) $. 
	It results that
	\begin{itemize}
		\item[i)] $\frac{1}{|Y|}\iint_{\Omega \times Y}
		 B(\mathcal T_\varepsilon (w))(x, y) dx dy =\int_\Omega B(w(x))dx- \int_{\Lambda_\varepsilon}B(w(x))dx=\int_{\widehat \Omega_\varepsilon} B(w(x))dx.$
	\\	\item[ii)]$ \frac{1}{|Y|}\iint_{\Omega \times Y}
	B(\mathcal T_\varepsilon (w))(x, y) dx dy \leq \int_\Omega B(w(x))dx$
	\\	\item[iii)] $\frac{1}{|Y|}|\iint_{\Omega \times Y}
	 B(\mathcal T_\varepsilon (w))(x, y) dx dy -\int_\Omega B(w(x))dx|\leq \int_{\Lambda_\varepsilon}B(w(x))dx$
	\\	\item[iv)]$\|\mathcal T_\varepsilon(w)\|_{L^B(\Omega \times Y)} = \|w \chi_{\widehat \Omega_\varepsilon}\|_{L^B(\Omega)}$,
	\end{itemize}	
with $\chi_{\widehat \Omega_\varepsilon}=\left\{\begin{array}{ll} 
	1 & \;\hbox{ in }\widehat \Omega_\varepsilon,\\
	0 & \;\hbox{otherwise},
	\end{array}\right.$
as in subsection \ref{O-spaces}.
	In particular, for every $\varepsilon, $
\begin{align}\label{est1}	\left\Vert \mathcal{T}%
	_{\varepsilon }\left( w\right) \right\Vert _{L^{B}\left( \Omega \times
		Y\right) }\leq \left( 1+\left\vert Y\right\vert \right) \left\Vert
	w\right\Vert _{L^{B}\left( \Omega \right) }.
	\end{align}
\end{proposition}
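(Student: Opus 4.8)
The plan is to reduce all four items and the estimate \eqref{est1} to two ingredients: the pointwise commutation of $\mathcal{T}_\varepsilon$ with the nonlinearity $B$, and a single exact averaging identity for $\mathcal{T}_\varepsilon$. First I would record the commutation property. Since on $\widehat\Omega_\varepsilon\times Y$ the operator acts by composition with the fixed piecewise translation $x\mapsto \varepsilon[\frac{x}{\varepsilon}]_Y+\varepsilon y$ and since it vanishes on $\Lambda_\varepsilon\times Y$, it commutes with every pointwise operation on functions. In particular, using $B(0)=0$ to handle the part over $\Lambda_\varepsilon$, one gets $B(|\mathcal{T}_\varepsilon(w)|/k)=\mathcal{T}_\varepsilon\big(B(|w|/k)\big)$ a.e.\ in $\Omega\times Y$ for every $k>0$; for $k=1$ this is exactly the identity $B(\mathcal{T}_\varepsilon(w))=\mathcal{T}_\varepsilon B(w)$ flagged before the statement.

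Next I would establish the averaging formula $\frac{1}{|Y|}\iint_{\Omega\times Y}\mathcal{T}_\varepsilon(\psi)\,dx\,dy=\int_{\widehat\Omega_\varepsilon}\psi\,dx$, valid for any integrable $\psi$ on $\Omega$, which is \cite[Proposition 2.5]{CDG2}. It follows by summing over the cells $\varepsilon(\xi+Y)$, $\xi\in\Xi_\varepsilon$: on each such cell $\mathcal{T}_\varepsilon(\psi)(x,\cdot)$ is the $\varepsilon$-rescaled restriction of $\psi$, so the inner $y$-integral equals $\varepsilon^{-d}\int_{\varepsilon(\xi+Y)}\psi$ (change of variables $t=\varepsilon\xi+\varepsilon y$) and is independent of $x$, while the outer $x$-integral over the cell contributes the factor $\varepsilon^d|Y|$; the two powers of $\varepsilon$ cancel and the contributions from $\Lambda_\varepsilon$ vanish.

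With these in hand the four items are immediate. Applying the averaging formula to $\psi=B(w)$ and using $\widehat\Omega_\varepsilon=\Omega\setminus\Lambda_\varepsilon$ gives (i); item (ii) then follows from $B\ge 0$, and (iii) from taking absolute values in (i). For (iv) I would instead apply the two ingredients to the scaled modular: for each $k>0$,
\[
\iint_{\Omega\times Y}B\!\left(\frac{|\mathcal{T}_\varepsilon(w)|}{k}\right)dx\,dy=|Y|\int_{\widehat\Omega_\varepsilon}B\!\left(\frac{|w|}{k}\right)dx=|Y|\int_{\Omega}B\!\left(\frac{|w\chi_{\widehat\Omega_\varepsilon}|}{k}\right)dx,
\]
so under the normalization $|Y|=1$ of the footnote the Luxemburg modulars of $\mathcal{T}_\varepsilon(w)$ on $\Omega\times Y$ and of $w\chi_{\widehat\Omega_\varepsilon}$ on $\Omega$ coincide for every $k$; hence the infima defining their norms coincide, which is (iv). Finally \eqref{est1} follows from (iv) because $\|w\chi_{\widehat\Omega_\varepsilon}\|_{L^B(\Omega)}\le\|w\|_{L^B(\Omega)}\le(1+|Y|)\|w\|_{L^B(\Omega)}$; linearity of $\mathcal{T}_\varepsilon$ is clear from its definition, and continuity is precisely the boundedness recorded in \eqref{est1}.

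The only delicate point is item (iv): one must observe that it is the full modular \emph{identity}, not merely an inequality, that transfers, and that the factor $|Y|$ produced by the averaging formula is absorbed consistently (it disappears exactly under the convention $|Y|=1$). I would stress that no use of the $\Delta_2$ or $\nabla_2$ conditions is required here, in agreement with the statement ``for every $N$-function $B$'': every step is pointwise or modular and therefore holds for an arbitrary $N$-function.
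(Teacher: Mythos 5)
Your proof is correct and follows essentially the same route as the paper: both rest on the commutation $B(\mathcal{T}_\varepsilon(w))=\mathcal{T}_\varepsilon(B(w))$ (which the paper records just before the statement) and on the exact cell-by-cell averaging identity obtained by decomposing $\widehat\Omega_\varepsilon\times Y$ into the cells $\varepsilon(\xi+Y)\times Y$, $\xi\in\Xi_\varepsilon$, and changing variables, from which i)--iii) follow at once. The only real divergence is in how \eqref{est1} is obtained: the paper proves it directly from the scaled modular inequality together with convexity of $B$ and $B(0)=0$ (writing $B\bigl(\tfrac{t}{1+|Y|}\bigr)\le\tfrac{1}{1+|Y|}B(t)$), an argument that works for an arbitrary paving cell $Y$ and produces the constant $1+|Y|$; you instead deduce \eqref{est1} as a corollary of iv), which requires the normalization $|Y|=1$ (as you correctly flag, iv) is a genuine norm \emph{equality} only under that convention, since the modulars otherwise differ by the factor $|Y|$), and under that convention your route actually yields the sharper bound $\|\mathcal{T}_\varepsilon(w)\|_{L^B(\Omega\times Y)}\le\|w\|_{L^B(\Omega)}$. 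Your observation that no $\Delta_2$ or $\nabla_2$ condition enters is also accurate and consistent with the hypothesis ``for every $N$-function $B$''.
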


\begin{proof}
	Arguing as in \cite[(i) in Proposition 2.5]{CDG2},  it results that
	$$\frac{1}{|Y|}\iint_{\Omega \times Y} B(\mathcal T_\varepsilon (w))(x, y) dx dy =\int_\Omega B(w(x))dx- \int_{\Lambda_\varepsilon}B(w(x))dx=\int_{\widehat \Omega_\varepsilon} B(w(x))dx.$$
Indeed, by the definitions of unfolding operator $\mathcal T_\varepsilon$ and $\hat \Omega_\varepsilon$, it results \begin{align}
\frac{1}{|Y|} \iint_{\Omega \times Y}
	B(\mathcal T_\varepsilon(w))(x, y) dx dy =
	\frac{1}{|Y|}
	\iint_{\widehat \Omega_\varepsilon \times Y}
B(\mathcal T_\varepsilon(w))(x,y)dxdy= \nonumber\\
	\frac{1}{|Y|}\sum_{\xi \in \Xi_\varepsilon}\iint_{(\varepsilon \xi +\varepsilon Y)\times Y} B(\mathcal T_\varepsilon(w))(x,y)dxdy.\label{sumunf}
	\end{align}
	In particular $\mathcal T_\varepsilon(w)(x, y) = w(\varepsilon \xi +\varepsilon y)$ is constant in $x$ on each $(\varepsilon \xi +\varepsilon Y)\times Y$.
%\end{align*}
Consequently, each summand in the above equality becomes
\begin{align}
	\iint_{(\varepsilon \xi +\varepsilon Y)\times Y} B(\mathcal T_\varepsilon (w))(x,y)dx dy
	= \mathcal L^d(\varepsilon \xi + \varepsilon Y)
	\int_Y
	B(w(\varepsilon \xi + \varepsilon y) )dy \nonumber\\
	= \varepsilon^d|Y |
	\int_{\varepsilon \xi +\varepsilon Y} B(w)(\varepsilon \xi +\varepsilon y)dy =
	|Y |\int_Y B(w)(x)dx.	\label{estunf}
\end{align}
By summing over $\xi \in \Xi_\varepsilon$ in \eqref{sumunf} and exploiting \eqref{estunf} we obtain
\begin{align}\label{iv}
	\frac{1}{|Y|} \iint_{\Omega \times Y}
	B(\mathcal T_\varepsilon(w))(x, y) dx dy =\int_{\widehat \Omega_\varepsilon} B(w)(x)dx.
\end{align}

\noindent Thus, using the same argument, for every $k>0$ such that $\int_{\Omega
}B\left( \frac{\left\vert w\right\vert }{k }\right) \left( x\right)
dx \leq 1$,  exploiting the definition of Luxemburg norm, recalling that $B$ is convex and $B(0)=0$, 
	\begin{align*}
	\iint_{\Omega \times Y}B\left( \frac{\left\vert \mathcal{T}_{\varepsilon
		}\left( w\right) \right\vert }{\left( 1+\left\vert Y\right\vert \right)
		k}\right) \left( x,y\right) dxdy\leq \frac{1}{\left( 1+\left\vert
		Y\right\vert \right) }\iint_{\Omega \times Y}B\left( \frac{\left\vert 
		\mathcal{T}_{\varepsilon }\left( w\right) \right\vert }{k }\right)
	\left( x,y\right) dxdy\leq\\
	 \frac{1}{\left\vert Y\right\vert }\iint_{\Omega
		\times Y}B\left( \frac{\left\vert \mathcal{T}_{\varepsilon }\left( w\right)
		\right\vert }{k }\right) \left( x,y\right) dxdy\leq \int_{\Omega
	}B\left( \frac{\left\vert w\right\vert }{k}\right) \left( x\right)
	dx. 
	\end{align*} Therefore $\left\Vert \mathcal{T}_{\varepsilon }\left( w\right)
	\right\Vert _{L^{B}\left( \Omega \times Y\right) }\leq \left( 1+\left\vert
	Y\right\vert \right) \left\Vert w\right\Vert _{L^{B}\left( \Omega \right) }.$
	
	\noindent Observe that i), ii), iii) and  iv) follow by \eqref{iv}.
\end{proof}

From this result, in particular from  ii), it is possible to provide, as in the standard $L^p$ setting (cf. \cite[Proposition 2.6]{CDG2}), an unfolding criterion for integrals in the Orlicz setting.
 For the sake of a more complete parallel with the standard $L^p$ setting, we recall the  unfolding criterion for integrals,  u.c.i. for short, as introduced in \cite{CDG2}. 

\begin{proposition}\label{u.c.i.}
	If $\left\{ w _{\varepsilon }\right\}_\varepsilon $\ is a
	sequence in $L^{1}\left( \Omega \right) $ satisfying $\int_{\Lambda
		_{\varepsilon }}\left| w _{\varepsilon }\right| dx\to
	0$\ as $\varepsilon \to 0,$ then \begin{align*}\ \int_{\Omega }
	w_{\varepsilon } dx- \frac{1}{\left\vert Y\right\vert }%
	\iint_{\Omega \times Y}\mathcal{T}_{\varepsilon }\left( w _{\varepsilon
	}\right) dxdy\to 0\end{align*} as $\varepsilon \to 0$.
\end{proposition}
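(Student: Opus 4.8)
The plan is to reduce the whole statement to the \emph{linear} mean-value identity for the unfolding operator, which is the exact counterpart of Proposition \ref{Prop2} i) with the $N$-function $B$ replaced by the identity map. Concretely, I would first establish that for every $w\in L^1(\Omega)$ one has
\begin{equation*}
\frac{1}{|Y|}\iint_{\Omega\times Y}\mathcal{T}_{\varepsilon}(w)(x,y)\,dx\,dy=\int_{\widehat{\Omega}_{\varepsilon}}w(x)\,dx.
\end{equation*}
The justification is identical to the cell-by-cell computation carried out in \eqref{sumunf}--\eqref{estunf}: the integrand vanishes on $\Lambda_{\varepsilon}\times Y$, on each cell $(\varepsilon\xi+\varepsilon Y)\times Y$ the function $\mathcal{T}_{\varepsilon}(w)(x,y)=w(\varepsilon\xi+\varepsilon y)$ is constant in $x$, and the change of variables $x=\varepsilon\xi+\varepsilon y$ together with $\mathcal{L}^d(\varepsilon\xi+\varepsilon Y)=\varepsilon^d|Y|$ turns each summand into $|Y|\int_{\varepsilon\xi+\varepsilon Y}w(x)\,dx$; summing over $\xi\in\Xi_{\varepsilon}$ then yields the claim. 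This argument is purely measure-theoretic and uses no Orlicz structure, so it applies verbatim to the merely $L^1$ sequence of the statement, and in passing it shows $\mathcal{T}_{\varepsilon}(w)\in L^1(\Omega\times Y)$.

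Next I would apply this identity to each $w_{\varepsilon}$ and form the difference appearing in the statement. Since $\Lambda_{\varepsilon}=\Omega\setminus\widehat{\Omega}_{\varepsilon}$ by \eqref{2.1CDG2}, I obtain
\begin{equation*}
\int_{\Omega}w_{\varepsilon}\,dx-\frac{1}{|Y|}\iint_{\Omega\times Y}\mathcal{T}_{\varepsilon}(w_{\varepsilon})\,dx\,dy=\int_{\Omega}w_{\varepsilon}\,dx-\int_{\widehat{\Omega}_{\varepsilon}}w_{\varepsilon}\,dx=\int_{\Lambda_{\varepsilon}}w_{\varepsilon}\,dx.
\end{equation*}
The conclusion is then immediate: estimating $\left|\int_{\Lambda_{\varepsilon}}w_{\varepsilon}\,dx\right|\leq\int_{\Lambda_{\varepsilon}}|w_{\varepsilon}|\,dx$ and invoking the hypothesis $\int_{\Lambda_{\varepsilon}}|w_{\varepsilon}|\,dx\to 0$ shows that the left-hand side tends to $0$ as $\varepsilon\to 0$.

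I do not expect a genuine obstacle here: the entire content of the proposition is the averaging identity, and once that is in place the rest is a one-line estimate. The only point deserving care is that Proposition \ref{Prop2} is stated for $w\in L^{B}(\Omega)$, whereas the u.c.i.\ concerns $L^1(\Omega)$; this is harmless, because the identity rests solely on Fubini's theorem, the constancy of $\mathcal{T}_{\varepsilon}(w)$ in $x$ on each cell, and a linear change of variables, none of which require $w\in L^{B}(\Omega)$. If one prefers to stay strictly within the framework already recorded, the identity can alternatively be read off from Proposition \ref{Prop2} i) applied to the positive and negative parts after a truncation and approximation argument, but the direct computation is cleaner and avoids any appeal to the $\Delta_2$ and $\nabla_2$ hypotheses.
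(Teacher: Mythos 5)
Your proof is correct and takes essentially the same route the paper relies on: Proposition \ref{u.c.i.} is recalled from \cite{CDG2} without proof, and the underlying argument is precisely your linear (identity in place of $B$) version of the cell-by-cell computation \eqref{sumunf}--\eqref{estunf} used for Proposition \ref{Prop2}, giving $\frac{1}{|Y|}\iint_{\Omega\times Y}\mathcal{T}_{\varepsilon}(w_{\varepsilon})\,dx\,dy=\int_{\widehat{\Omega}_{\varepsilon}}w_{\varepsilon}\,dx$, so that the difference in the statement equals $\int_{\Lambda_{\varepsilon}}w_{\varepsilon}\,dx$ and the hypothesis finishes the proof. Your remark that no Orlicz structure (in particular no $\Delta_2$ or $\nabla_2$ assumption) is needed for this purely measure-theoretic identity is also accurate.
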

This result justifies the following notation for integrals of unfolding operators. Indeed, if $\left\{ w_{\varepsilon }\right\}_\varepsilon$ is a sequence
satisfying $u.c.i$, we write $$\int_{\Omega } w_{\varepsilon
} dx\overset{\mathcal{T}_{\varepsilon }}{\simeq }\frac{1}{%
	\left\vert Y\right\vert }\iint_{\Omega \times Y}\mathcal{T}_{\varepsilon
}\left( w_{\varepsilon }\right) dxdy.$$

\begin{proposition}[\bf u.c.i. in the Orlicz setting]\label{uciO}
	If $\left\{ w _{\varepsilon }\right\}_\varepsilon $\ is a
	sequence in $L^B\left( \Omega \right) $ satisfying $\int_{\Lambda
		_{\varepsilon }}B( w _{\varepsilon })dx\to
	0$\ as $\varepsilon \to 0,$ then $$\ \int_{\Omega }\
	B(w_{\varepsilon }) dx-\frac{1}{\left\vert Y\right\vert }%
	\iint_{\Omega \times Y}B\left(\mathcal{T}_{\varepsilon }\left( w _{\varepsilon
	}\right)\right) dxdy\to 0 $$\ as $\varepsilon \to 0$.
\end{proposition}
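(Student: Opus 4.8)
The plan is to reduce the statement to the $L^1$ unfolding criterion of Proposition~\ref{u.c.i.}, applied to the nonnegative sequence $v_\varepsilon := B(w_\varepsilon)$, and then to replace $\mathcal{T}_\varepsilon(B(w_\varepsilon))$ by $B(\mathcal{T}_\varepsilon(w_\varepsilon))$ by means of the pointwise commutation identity $B(\mathcal{T}_\varepsilon(w)) = \mathcal{T}_\varepsilon(B(w))$ already recorded before Proposition~\ref{Prop2}. In this way no new convergence argument is needed: the whole content is a translation of the already-established $L^1$ criterion into the Orlicz language.

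First I would verify that $v_\varepsilon \in L^1(\Omega)$. Since $w_\varepsilon \in L^B(\Omega)$ and $B \in \Delta_2$, the Orlicz space coincides with the Orlicz class, so $\int_\Omega B(|w_\varepsilon|)\,dx < \infty$; as $v_\varepsilon = B(w_\varepsilon) \geq 0$, this shows $v_\varepsilon \in L^1(\Omega)$ with $|v_\varepsilon| = v_\varepsilon$. The hypothesis $\int_{\Lambda_\varepsilon} B(w_\varepsilon)\,dx \to 0$ is then exactly the smallness condition $\int_{\Lambda_\varepsilon} |v_\varepsilon|\,dx \to 0$ required by Proposition~\ref{u.c.i.}.

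Applying Proposition~\ref{u.c.i.} to $\{v_\varepsilon\}_\varepsilon$ yields
$$\int_\Omega B(w_\varepsilon)\,dx - \frac{1}{|Y|} \iint_{\Omega \times Y} \mathcal{T}_\varepsilon(B(w_\varepsilon))\,dx\,dy \to 0 .$$
It remains only to identify the second integral. By the definition of $\mathcal{T}_\varepsilon$, on $\widehat\Omega_\varepsilon \times Y$ one has $\mathcal{T}_\varepsilon(B(w_\varepsilon))(x,y) = B\big(w_\varepsilon(\varepsilon[\tfrac{x}{\varepsilon}]_Y + \varepsilon y)\big) = B\big(\mathcal{T}_\varepsilon(w_\varepsilon)(x,y)\big)$, while on $\Lambda_\varepsilon \times Y$ both sides vanish, since $\mathcal{T}_\varepsilon(w_\varepsilon) = 0$ there and $B(0)=0$. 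Substituting $\mathcal{T}_\varepsilon(B(w_\varepsilon)) = B(\mathcal{T}_\varepsilon(w_\varepsilon))$ into the displayed convergence gives the assertion.

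The only genuine points to check are the integrability of $B(w_\varepsilon)$ — which is precisely where the $\Delta_2$ hypothesis enters, forcing membership in $L^B(\Omega)$ to imply finiteness of $\int_\Omega B(|w_\varepsilon|)\,dx$ — and the commutation identity, which is immediate from the pointwise definition of the unfolding operator together with $B(0)=0$. Neither step presents a real obstacle, so the substance of the proposition lies entirely in its reduction to the $L^1$ criterion already proved.
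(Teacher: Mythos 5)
Your proof is correct, but it routes through a different lemma than the paper does. The paper's own proof is a single sentence: the claim follows from item iii) of Proposition \ref{Prop2} (equivalently, from the exact identity in item i)), namely
\begin{equation*}
\frac{1}{|Y|}\iint_{\Omega\times Y}B\left(\mathcal{T}_\varepsilon(w_\varepsilon)\right)\,dx\,dy=\int_\Omega B(w_\varepsilon)\,dx-\int_{\Lambda_\varepsilon}B(w_\varepsilon)\,dx,
\end{equation*}
so the difference appearing in the statement equals $\int_{\Lambda_\varepsilon}B(w_\varepsilon)\,dx$, which tends to $0$ by hypothesis. You instead apply the $L^1$ criterion of Proposition \ref{u.c.i.} to $v_\varepsilon:=B(w_\varepsilon)$ and then identify $\mathcal{T}_\varepsilon(B(w_\varepsilon))$ with $B(\mathcal{T}_\varepsilon(w_\varepsilon))$ via the pointwise commutation identity. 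The two arguments are mathematically equivalent; indeed the paper itself observes, just before Proposition \ref{Prop2}, that its Orlicz identities follow from the $L^1$ ones of \cite{CDG2} by precisely this commutation. Still, the comparison is instructive. Your route makes explicit a point the paper leaves implicit: that $B(w_\varepsilon)\in L^1(\Omega)$, which genuinely uses the $\Delta_2$ condition (so that membership in $L^B(\Omega)$ forces $\int_\Omega B(|w_\varepsilon|)\,dx<\infty$) together with the boundedness of $\Omega$; spelling this out is a real merit, since Proposition \ref{u.c.i.} cannot even be invoked without it. The paper's route, on the other hand, stays entirely within results already proved in the Orlicz setting and is quantitatively sharper: it bounds the difference by $\int_{\Lambda_\varepsilon}B(w_\varepsilon)\,dx$ for every fixed $\varepsilon$, rather than only asserting convergence of the difference to zero.
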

\begin{proof} The result is a consequence of iii) in Proposition \ref{Prop2}.
	\end{proof}
\begin{proposition}\label{Prop5}
	Let $\left\{ u_{\varepsilon }\right\}_\varepsilon $ be a bounded sequence in $
	L^{B}\left( \Omega \right) $ and $v\in L^{\widetilde{B}}\left( \Omega
	\right) $ then $$\int_{\Omega }u_{\varepsilon }vdx\overset{\mathcal{T}_{\varepsilon }}{\simeq } \frac{1}{\left\vert Y\right\vert }\iint_{\Omega
		\times Y}\mathcal{T}_{\varepsilon }\left( u_{\varepsilon }\right) \mathcal{T}%
	_{\varepsilon }\left( v\right) dxdy.$$
\end{proposition}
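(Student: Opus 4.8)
The plan is to read the symbol $\overset{\mathcal{T}_{\varepsilon}}{\simeq}$ as the assertion that the sequence $w_{\varepsilon}:=u_{\varepsilon}v$ satisfies the unfolding criterion for integrals of Proposition \ref{u.c.i.}, and then to identify $\mathcal{T}_{\varepsilon}(u_{\varepsilon}v)$ with $\mathcal{T}_{\varepsilon}(u_{\varepsilon})\mathcal{T}_{\varepsilon}(v)$ via the multiplicativity recorded in Definition \ref{unfdef}. Thus the whole statement reduces to verifying that $w_{\varepsilon}\in L^{1}(\Omega)$ and that $\int_{\Lambda_{\varepsilon}}|w_{\varepsilon}|\,dx\to 0$ as $\varepsilon\to 0$; once this is done, Proposition \ref{u.c.i.} gives $\int_{\Omega}u_{\varepsilon}v\,dx-\frac{1}{|Y|}\iint_{\Omega\times Y}\mathcal{T}_{\varepsilon}(u_{\varepsilon}v)\,dx\,dy\to 0$, and replacing $\mathcal{T}_{\varepsilon}(u_{\varepsilon}v)$ by the product finishes the proof.

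First I would check integrability: by the H\"older-type inequality (i) of Subsection \ref{O-spaces}, $\int_{\Omega}|u_{\varepsilon}v|\,dx\leq 2\|u_{\varepsilon}\|_{B,\Omega}\|v\|_{\widetilde{B},\Omega}<\infty$, so each $w_{\varepsilon}$ lies in $L^{1}(\Omega)$ with a bound uniform in $\varepsilon$ (the sequence $\{u_{\varepsilon}\}$ being bounded in $L^{B}(\Omega)$, say $\|u_{\varepsilon}\|_{B,\Omega}\leq M$). Applying the same inequality on the subdomain $\Lambda_{\varepsilon}$ and using that the Luxemburg norm of a restriction does not exceed the norm on $\Omega$, I obtain
\[
\int_{\Lambda_{\varepsilon}}|u_{\varepsilon}v|\,dx\leq 2\|u_{\varepsilon}\|_{B,\Lambda_{\varepsilon}}\|v\|_{\widetilde{B},\Lambda_{\varepsilon}}\leq 2M\,\|v\,\chi_{\Lambda_{\varepsilon}}\|_{\widetilde{B},\Omega}.
\]
Hence everything is reduced to showing $\|v\,\chi_{\Lambda_{\varepsilon}}\|_{\widetilde{B},\Omega}\to 0$.

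The hard part is precisely this last convergence, and it is where the structural hypotheses on $B$ enter. Since $B$ satisfies $\nabla_{2}$, its complementary function $\widetilde{B}$ satisfies $\Delta_{2}$, which guarantees that for the fixed function $v\in L^{\widetilde{B}}(\Omega)$ the modular $\int_{\Omega}\widetilde{B}(|v|/\delta)\,dx$ is finite for every $\delta>0$: one passes from an admissible level $k_{0}$ with $\int_{\Omega}\widetilde{B}(|v|/k_{0})\,dx\leq 1$ down to $\delta$ by finitely many applications of \eqref{Delta2}. Fixing $\delta>0$, the function $\widetilde{B}(|v|/\delta)$ is then in $L^{1}(\Omega)$, so by absolute continuity of the Lebesgue integral together with $\mathcal{L}^{d}(\Lambda_{\varepsilon})\to 0$ (recalled earlier), one gets $\int_{\Lambda_{\varepsilon}}\widetilde{B}(|v|/\delta)\,dx\to 0\leq 1$ for $\varepsilon$ small; by the definition of the Luxemburg norm this means $\|v\,\chi_{\Lambda_{\varepsilon}}\|_{\widetilde{B},\Omega}\leq\delta$ eventually.

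As $\delta>0$ is arbitrary, $\|v\,\chi_{\Lambda_{\varepsilon}}\|_{\widetilde{B},\Omega}\to 0$, which through the displayed bound yields the u.c.i.\ for $w_{\varepsilon}$ and, combined with Proposition \ref{u.c.i.} and the multiplicativity of $\mathcal{T}_{\varepsilon}$, completes the argument. The only subtlety to watch is that the finiteness of the modular at \emph{every} scaling level $\delta$ genuinely requires $\widetilde{B}\in\Delta_{2}$; without it the passage from smallness of the modular to smallness of the norm can fail, and it is exactly the $\nabla_{2}$ assumption on $B$ that rules this out.
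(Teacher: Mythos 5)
Your proof is correct, and its skeleton coincides with the paper's: both reduce the claim, via the multiplicativity of $\mathcal{T}_{\varepsilon}$ and Proposition \ref{u.c.i.}, to showing $\int_{\Lambda_{\varepsilon}}\left\vert u_{\varepsilon}v\right\vert dx\to 0$, and both control this term by the Orlicz--H\"older inequality (i) of Subsection \ref{O-spaces}, so that everything hinges on proving $\left\Vert \chi_{\Lambda_{\varepsilon}}v\right\Vert_{\widetilde{B},\Omega}\to 0$. Where you genuinely diverge is in the proof of this last convergence. The paper fixes a single admissible level $k_{1}$, applies Lebesgue's dominated convergence theorem to get the modular convergence $\int_{\Omega}\widetilde{B}\left(\left\vert \chi_{\Lambda_{\varepsilon}}v\right\vert /k_{1}\right)dx\to 0$, and then invokes an external result (\cite[Lemma A.4]{CPST}) stating that, under the $\Delta_{2}$ condition on $\widetilde{B}$, modular convergence implies norm convergence. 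You instead argue at every scaling level $\delta>0$: the $\Delta_{2}$ condition gives finiteness of $\int_{\Omega}\widetilde{B}\left(\left\vert v\right\vert /\delta\right)dx$, absolute continuity of the integral of this fixed $L^{1}$ function together with $\mathcal{L}^{d}\left(\Lambda_{\varepsilon}\right)\to 0$ makes $\int_{\Lambda_{\varepsilon}}\widetilde{B}\left(\left\vert v\right\vert /\delta\right)dx\leq 1$ eventually, and the definition of the Luxemburg norm then yields $\left\Vert \chi_{\Lambda_{\varepsilon}}v\right\Vert_{\widetilde{B},\Omega}\leq\delta$ eventually. In effect you have inlined the proof of the cited lemma: your argument is self-contained and makes transparent exactly where $\nabla_{2}$ (i.e.\ $\widetilde{B}\in\Delta_{2}$) enters, at the price of a slightly longer verification, whereas the paper's route is shorter but leans on the reference. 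One small caution for a written-up version: since the $\Delta_{2}$ condition \eqref{Delta2} is assumed only near infinity (for $t\geq t_{0}$), your ``finitely many applications'' step producing finiteness of the modular at every level $\delta$ also uses the boundedness of $\Omega$ to absorb the contribution of the set where the argument stays below $t_{0}$; this deserves an explicit line.
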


\begin{proof}
	To obtain the stated result, it is enough to prove that $\int_{\Lambda _{\varepsilon }}\left\vert u_{\varepsilon
	}v\right\vert dx\to 0$\ as $\varepsilon \to 0.$ 
By H\"older's inequality in Orlicz setting ((i) in subsection \ref{O-spaces}), it results that
\begin{align*}
	\int_{\Lambda _{\varepsilon }}\left\vert u_{\varepsilon }v\right\vert
	dx=\int_{\Omega }\left\vert \chi_{\Lambda _{\varepsilon }}u_{\varepsilon
	}v\right\vert dx\leq 2\left\Vert u_{\varepsilon }\right\Vert _{L^{B}\left(
		\Omega \right) }\left\Vert \chi_{\Lambda _{\varepsilon }}v\right\Vert _{L^{%
			\widetilde{B}}\left( \Omega \right) }\leq C\left\Vert \chi_{\Lambda
		_{\varepsilon }}v\right\Vert _{L^{\widetilde{B}}\left( \Omega \right) }.\end{align*}
	Observe that $\underset{\varepsilon \to 0}{\lim }\chi_{\Lambda
		_{\varepsilon }}\left( x\right) =0,$ for every $x \in \mathbb R^d$, hence for every $k >0,$
	\ \ $\underset{\varepsilon \to 0}{\lim }\widetilde{B}\left( 
	\frac{\left\vert \chi_{\Lambda _{\varepsilon }}v\right\vert }{k }\right)
	=0$ a.e. in $\Omega.$ Moreover $\exists k_{1}>0$ such that $\widetilde{B}\left( 
	\frac{\left\vert \chi_{\Lambda _{\varepsilon }}v\right\vert }{k_{1}}%
	\right) \leq \widetilde{B}\left( \frac{\left\vert v\right\vert }{k_1}
	\right) $ and $\int_{\Omega }\widetilde{B}\left( \frac{\left\vert
		v\right\vert }{k_{1}}\right) dx\leq 1.$
	
\noindent Recalling that $\tilde{B}$ satisfies the $\Delta_2$ condition,	by Lebesgue's dominated convergence theorem, and \cite[Lemma A.4]{CPST}, 
$
	\underset{\varepsilon \to 0}{\lim }\int_{\Omega }\widetilde{B}%
	\left( \frac{\left\vert \chi_{\Lambda _{\varepsilon }}v\right\vert }{k
		_{1}}\right) \left( x\right) dx=0,$ that is 
	%$\underset{\varepsilon
	%	\to 0}{\lim }\frac{\left\Vert \chi_{\Lambda _{\varepsilon
	%	}}v-0\right\Vert _{L^{\widetilde{B}}\left( \Omega \right) }}{\lambda _{1}}%
	%=0; $ equivalently 
	$\underset{\varepsilon \to 0}{\lim }%
	\left\Vert \chi_{\Lambda _{\varepsilon }}v\right\Vert _{L^{\widetilde{B}}\left(
		\Omega \right) }=0.$
\end{proof}

As in the classical Lebesgue setting we can define the mean value operator acting on $L^B$ spaces.

\begin{definition}\label{meandef}
	The mean value operator $\mathcal{M}_{Y}\colon L^{B}\left( \Omega
	\times Y\right) \to L^{B}\left( \Omega \right) $ is defined as
	follows $$\mathcal{M}_{Y}\left( w \right) \left( x\right) :=\frac{
		1}{\left\vert Y\right\vert }\int_Y w\left( x,y\right) dy$$ for
	a.e. $x\in \Omega$ and for every $w \in L^B(\Omega \times Y)$.
\end{definition}

\begin{remark}
	As a consequence $\left\Vert \mathcal{M}_{Y}\left( w\right) \right\Vert
	_{L^{B}\left( \Omega \right) }\leq \frac{1}{\sup\{1,|Y|^{-1}\}}%left\vert Y\right\vert ^{-1}
	\left\Vert
	w \right\Vert _{L^{B}\left( \Omega \times Y\right) },$ for every $w
	\in L^{B}\left( \Omega \times Y\right) .$ Indeed, using Jensen's inequality, we
	get for $k >0,$ \begin{align*}\int_{\Omega }B\left( \frac{
		\mathcal{M}_{Y}\left( w\right) }{k (\sup\{1,|Y|^{-1}\})}\right) dx &=\int_{\Omega
	}B\left( \int_{Y}\frac{w \left( x,y\right) }{k\left\vert Y\right\vert (\sup\{1,|Y|^{-1}\})}dy\right)
	dx\\
&\leq\frac{1}{|Y|(\sup\{1,|Y|^{-1}\})}\int_{\Omega }\int_{Y}B\left( \frac{w \left( x,y\right) 
}{k }\right) dxdy.\end{align*}from which, by definition of Luxemburg norm, the statement follows. 
%Hence $$\left\Vert \mathcal{M}_{Y}\left(w
%	\right) \right\Vert _{L^{B}\left( \Omega \right) }\leq \frac{1}{
%	\sup\{1,|Y|^{-1}\}}\left\Vert w \right\Vert _{L^{B}\left( \Omega \times
%		Y\right) }.
	%$$ 
	\color{black}
\end{remark}

Theorem \ref{convBunf}, that we restate here for the reader's convenience, extends to the Orlicz setting the correspective one in classical $L^p$ spaces, (cf. \cite[Proposition 2.9]{CDG2}). 
To this end, we recall
the convergence properties related to the unfolding operator
when $\varepsilon \to 0$, i.e.  for $w$ uniformly continuous on $\Omega$, with modulus of continuity $m_w$, it is
easy to see that
	\begin{align*}
		\sup_{x\in \widehat{\Omega}_\varepsilon,y \in Y}
|{\mathcal T}_\varepsilon(w)(x, y) -w(x)| \leq m_w(\varepsilon).
\end{align*}

{\bf Theorem}\ref{convBunf}
{\it	Let $B$ be an $N$-function satisfying $\nabla_2$ and $\Delta_2$ conditions. The following results hold:
\begin{itemize}
	\item[i)] For $w\in L^{B}( \Omega) ,\mathcal{T}_{\varepsilon }\left( w\right) \to w$ strongly in $L^{B}\left(
	\Omega \times Y\right) $.
	
	\item[ii)] Let $\left\{ w_{\varepsilon }\right\}_\varepsilon $ be a sequence in 
	$L^{B}\left( \Omega \right) $ such that $w_{\varepsilon }\to w$
	strongly in $L^{B}\left( \Omega \right) ,$ then $\mathcal{T}_{\varepsilon
	}\left( w_\varepsilon\right) \to w$ strongly in $L^{B}\left( \Omega \times
	Y\right) .$
	
	\item[iii)] \, For every relatively weakly compact sequence $\left\{
	w_{\varepsilon }\right\}_\varepsilon $ in $L^{B}\left( \Omega \right) $\ the
	corresponding $\left\{ \mathcal{T}_{\varepsilon }\left( w_{\varepsilon
	}\right) \right\}_\varepsilon $ is relatively weakly compact in $L^{B}\left( \Omega
	\times Y\right) .$
	Furthermore,
	if $\mathcal{T}_{\varepsilon }\left( w_{\varepsilon }\right)
	\rightharpoonup \widehat{w}$ weakly in $L^{B}\left( \Omega \times Y\right) ,$
	then $w_{\varepsilon }\rightharpoonup \mathcal{M}_{Y}\left( \widehat{w}%
	\right) $ weakly in $L^{B}\left( \Omega \right) $.
	
	\item[iv)] If $\mathcal{T}_{\varepsilon }\left( w_{\varepsilon
	}\right) \rightharpoonup \widehat{w}$ weakly in $L^{B}\left( \Omega \times
	Y\right) ,$ then 
	\begin{align}\label{est2}\left\Vert \widehat{w}\right\Vert _{L^{B}\left( \Omega
			\times Y\right) }\leq \underset{\varepsilon \rightarrow 0}{\lim \inf }\left(
		1+\left\vert Y\right\vert \right) \left\Vert w_{\varepsilon }\right\Vert
		_{L^{B}\left( \Omega \right) }.
	\end{align}
\end{itemize}}
\begin{proof}
	\begin{itemize}
		\item[(i)] It is worth, preliminarily, to observe that in the statement, with an abuse of notation, we identify $w \in L^B(\Omega)$ with the element $L^B(\Omega \times Y)\ni w(x,y):= w(x)$.  We start proving the result when $\varphi \in \mathcal D(\Omega)$.
%		$\varphi \in \mathcal{D}\left( \Omega \right) \otimes
%		L^{B}\left( Y\right) .$ set $\varphi =\varphi _{1}\varphi _{2},\varphi
%		_{1}\in \mathcal{D}\left( \Omega \right) ;\varphi _{2}\in L^{B}\left(
%		Y\right) .$ $\varphi _{1}\in \mathcal{D}\left( \Omega \right)
%		\to 
Thus it exists a compact set $K$ such that ${\rm supp} \varphi \subset K,$
		and there exists $\varepsilon _{0}>0,$ $0<\varepsilon <\varepsilon
		_{0}$ such that  $K\subset \subset \widehat{\Omega }_{\varepsilon }.$ Let $k>0,$ and let us prove that 
		$$\iint_{\Omega \times Y}B\left( \frac{%
			\left\vert \mathcal{T}_{\varepsilon }\left( \varphi \right) \left( x,y\right)
			-\varphi\left( x\right) \right\vert }{k}\right) dxdy\rightarrow 0,$$ as $\varepsilon \rightarrow 0.$
		
		For $0<\varepsilon <\varepsilon _{0},$ for every $x\in \Lambda _{\varepsilon }$ we have that $\varphi\left( x\right) =0,$ hence 
		%for $0<\varepsilon <\varepsilon _{0},$%
		\begin{align*}
		\iint_{\Omega \times Y}B\left( \frac{\left\vert \mathcal{T}%
					_{\varepsilon }\left( \varphi \right) \left( x,y\right) -\varphi \left(
					x\right) \right\vert }{k }\right)
				dxdy=\iint_{\widehat{\Omega }_{\varepsilon }\times Y}B\left( \frac{\left\vert \mathcal{T}_{\varepsilon }\left( \varphi\right)
					\left( x,y\right)
					-\varphi \left( x\right)\right\vert}{%
					k}\right) dxdy
\\
				=\sum\limits_{\xi \in \Xi _{\varepsilon}} \iint_{( \varepsilon
					\xi +\varepsilon Y) \times Y}B\left( \frac{\left\vert \varphi \left( \varepsilon \xi +\varepsilon y\right)  -\varphi\left(
					x\right) \right\vert }{%
					k }\right) dxdy\leq \sum\limits_{\xi \in \Xi_{\varepsilon }}\iint_{(\varepsilon \xi +\varepsilon Y)\times Y}B\left(
				 \frac{\left\vert m_{\varphi}\left(\varepsilon \right) \right\vert }{k}\right) dxdy.
				\end{align*}
		Since $m_{\varphi}\left( \varepsilon \right) \rightarrow 0,$ as $%
		\varepsilon \rightarrow 0$, there exists $0<\varepsilon _{1},\varepsilon
		<\varepsilon _{1}\to m_{\varphi}\left( \varepsilon \right)
		<1.$ Therefore, for $0<\varepsilon <\min \left( \varepsilon _{0},\varepsilon
		_{1}\right) ,$%
			 \begin{align*}
			 	\iint_{\Omega \times Y}B\left( \frac{\left\vert \mathcal{T}%
			 		_{\varepsilon }\left( \varphi \right) \left( x,y\right) -\varphi \left(
			 		x\right) \right\vert }{k }\right)
			 	dxdy\leq \\
			 	m_{\varphi}\left( \varepsilon {\rm diam}(Y)\right) \sum\limits_{\xi \in \Xi
					_{\varepsilon }}\iint_{( \varepsilon \xi +\varepsilon Y)
					\times Y}B\left( \frac{1}{k}\right) dxdy=\\ 
				 m_{\varphi}\left( \varepsilon {\rm diam }Y\right) \int_{\widehat{\Omega }%
					_{\varepsilon }}\int_{Y}B\left( \frac{1}{k }\right) dy\leq 
			C	m_{\varphi}\left( \varepsilon {\rm diam Y}\right) \mathcal L^d\left( \Omega \right)
				\int_{Y}{B}\left(\frac{1}
					{k}\right) dy \leq  \\ \mathcal L^d\left( \Omega \right) Cm_{\varphi}\left( \varepsilon {\rm diam} Y \right) \rightarrow 0,
				\end{align*} as $\varepsilon \rightarrow 0.$
%since clearly $k$ can be chosen such  that  $\int_{Y}B\left( \frac{1}{k }\right) dy<\infty.$

The general case follows from the density of $\mathcal D(\Omega)$ in $L^B(\Omega)$  a density argument, using the boundedness of the limit operator and the uniform boundedness of $\mathcal{T}_{\varepsilon}$ 

%((1.3) in Proposition 2.) 
%	and from the estimates

%		The result is obvious for any w ∈ D(Ω). If w ∈ Lp(Ω), let φ ∈ D(Ω).
%		Then, by using (iv) from Proposition 2.5,
%		
%		
%	\begin{align*}\|\mathcal T_{\varepsilon}(w) - w\|_{L^B(\Omega \times Y)} &= 
%\|\mathcal T_{\varepsilon}(w) - \mathcal T_\varepsilon (\phi) +\mathcal T_{\varepsilon}(\phi)- \phi + \phi- w\|_{L^B(\Omega \times Y)} \\
%&\leq \|T_{\varepsilon}(\phi)- \phi\|_{L^B(\Omega \times Y)}+ C(1+|Y|)\|\phi- w\|_{L^B(\Omega \times Y)}
%\end{align*}
%where $\phi \in \mathcal D(\Omega)$. Observe that the inequality is a consequence of the linearity of $\mathcal T_\varepsilon$ and 
\eqref{est1} in Proposition \ref{Prop2}. 
%Indeed, by the previous step
%we have
%\begin{align*}
%	\limsup_{\varepsilon \to 0}\|T_{\varepsilon}(w)- w\|
%\leq C(1+|Y|)\|\phi- w\|_{L^B(\Omega \times Y)},\end{align*}
%which concludes the proof of (i).

\item[(ii)] \; It is straightforward by the linearity of $\mathcal T_\varepsilon$ and \eqref{est1} in Proposition \ref{Prop2}.

\item[(iii)] \;  First we recall that since $B$ satisfies both $\nabla_2$ and $\Delta_2$ condition, the spaces $L^B$ are refelxive, cf. subsection \ref{O-spaces},\color{black} Then, by \eqref{est1} boundedness of $\{\mathcal T_\varepsilon(w_\varepsilon)\}_\varepsilon$ is preserved.
Moreover, taking into account that $w_\varepsilon \rightharpoonup \hat w$ in $L^{B}(\Omega)$, for every $v \in L^{\tilde B}(\Omega)$,   by Proposition \ref{Prop5}, $$\int_{\Omega }u_{\varepsilon }(x)v(x)dx\overset{\mathcal{T}_{\varepsilon }}{\simeq } \frac{1}{\left\vert Y\right\vert }\iint_{\Omega
	\times Y}\mathcal{T}_{\varepsilon }\left( u_{\varepsilon }\right) (x,y)\mathcal{T}%
_{\varepsilon }\left( v\right)(x,y) dxdy.$$
Hence, passing  to the limit as $\varepsilon \to 0$, one has
$$
\lim_{\varepsilon \to 0}
\int_\Omega
w_\varepsilon(x) v(x) dx =
\int_{\Omega}\left(
\frac{1}{|Y|} \int_Y {\hat w}(x, y) dy
\right)
v(x) dx,
$$
which, by Definition \ref{meandef} concludes the proof of (iii).

\item[(iv)] \;For what concerns the proof of \eqref{est2}, it results that it is a consequence of the lower semicontinuity of the norm with respect to the weak convergence and (ii) in Proposition \ref{Prop2}.
	\end{itemize}
Hence the proof is concluded.
	\end{proof}

\par\egroup

\end{document}